\theoremstyle{plain}
\newtheorem{theorem}{Theorem}
\newtheorem{lemma}[theorem]{Lemma}
\newtheorem{proposition}[theorem]{Proposition}
\theoremstyle{definition}
\theoremstyle{remark}
\newtheorem{remark}[theorem]{Remark}
\numberwithin{equation}{section}
\numberwithin{theorem}{section}
\newcommand\seref{Section \ref}
\newcommand{\lie}[1]{\mathfrak{#1}}
\newcommand{\M}[2]{\renewcommand\arraystretch{1.3}\mleft[\begin{array}{#1} #2 \end{array}\mright]}
\newcommand{\ii}{\mathrm{i}} %{\sqrt{-1}}
\newcommand{\CC}{\mathbb{C}}
\newcommand{\B}{\mathrm{B}}
\newcommand{\ph}{\varphi}
\newcommand{\ka}{\kappa}
\newcommand{\al}{\alpha}
\newcommand{\ga}{\gamma}
\newcommand{\la}{\lambda}
\newcommand{\Ve}{V_{\bar{0}}}
\newcommand{\Vo}{V_{\bar{1}}}
\newcommand{\Id}{\mathrm{Id}}
\DeclareMathOperator\Sp{\mathrm{Sp}}
\DeclareMathOperator\GL{\mathrm{GL}}
\DeclareMathOperator\Or{\mathrm{O}}
\DeclareMathOperator\re{Re}
\DeclareMathOperator\im{Im}
\DeclareMathOperator\Span{span}
\DeclareMathOperator\diag{diag}
\begin{document}

\title{Inhomogeneous supersymmetric bilinear forms}

\author{Bojko Bakalov}
\address{Department of Mathematics, North Carolina State University, 
Raleigh, NC 27695, USA}
\email{bojko\_bakalov@ncsu.edu}

\author{McKay Sullivan}
\address{Department of Mathematics, Dixie State University, Saint George, UT 84770, USA}
\email{mckay.sullivan@dixie.edu}

\subjclass[2010]{15A21, 17B60}

\date{December 30, 2016; Revised September 20, 2017}

\begin{abstract}
We consider inhomogeneous supersymmetric bilinear forms, i.e., forms
that are neither even nor odd. We classify such forms up to dimension
seven in the case when the restrictions of the form to the even and
odd parts of the superspace are nondegenerate. As an application, we
introduce a new type of oscillator Lie superalgebra.
\end{abstract}

\thanks{The first author is supported in part by a Simons Foundation grant 279074} 

%\keywords{Fock space; free superbosons; free superfermions; Lie superalgebra; symplectic fermions; twisted module; vertex algebra; Virasoro algebra}

 \maketitle

%\tableofcontents

%%%%%%%%%%%%%%%%%%%%%%%%%%%%%%%%%%%%%%%%%%%%%%%%%
%%%%%%%%%%%%%%%%%%%%%%%%%%%%%%%%%%%%%%%%%%%%%%%%%
\section{Introduction}\label{sec:intro}
%%%%%%%%%%%%%%%%%%%%%%%%%%%%%%%%%%%%%%%%%%%%%%%%%
%%%%%%%%%%%%%%%%%%%%%%%%%%%%%%%%%%%%%%%%%%%%%%%%%

Many important Lie superalgebras can be constructed as subalgebras of oscillator superalgebras (see, e.g., \cite{FAP}). For other good references on Lie superalgebras see \cite{K2}, \cite{CW}, and \cite{M}.  To obtain an oscillator Lie superalgebra one may start with a superspace $V = \Ve\oplus \Vo$ over $\CC$ and a \emph{skew-supersymmetric} bilinear form $(\cdot | \cdot)\colon V \times V \to V$, i.e., such that 
\begin{equation*}
(a|b)=-(-1)^{p(a)p(b)} (b|a)
\end{equation*}
for all homogeneous $a,b\in V$ of parities $p(a)$ and $p(b)$, respectively.
One usually takes this form to be nondegenerate and \emph{even}, i.e., $(\Ve|\Vo) = 0$. These two conditions imply that the restrictions of the form to $\Ve$ and $\Vo$ are also nondegenerate. Then a Lie superalgebra structure is given to the extension $A = V\oplus \CC K$  by declaring $K$ to be an even central element and letting
\begin{equation*}
[a,b] = (a|b)K, \qquad a,b\in V.
\end{equation*}
The universal enveloping algebra $\mathcal{U}(A)$ is known as
the \emph{oscillator algebra}. It admits highest weight representations, which can be restricted to give representations of its subalgebras.

We will call a bilinear form $(\cdot | \cdot)$ on a vector superspace $V$ \emph{inhomogeneous} if $(\Ve|\Vo) \ne 0$ and $(V_i|V_i) \ne 0$ for some $i\in\{\bar0,\bar1\}$.
Our original motivation for studying inhomogeneous forms was to investigate representations of subalgebras of  the oscillator algebras obtained by relaxing the assumption that the bracket of an even (bosonic) oscillator and an odd (fermionic) oscillator must be zero. 
%One obtains such algebras by no longer requiring that $(\Ve|\Vo) = 0$. 
Though skew-supersymmetric forms are used to construct oscillator algebras, there is a natural one-to-one correspondence between supersymmetric forms on $V$ and skew-supersymmetric forms on $V^\Pi$, where $V^\Pi$ is the superspace obtained by reversing the parity of the elements of $V$:
\begin{equation*}
\Ve^\Pi = \Vo, \qquad \Vo^\Pi = \Ve.
\end{equation*}
 The inhomogeneous supersymmetric bilinear forms whose restrictions to $\Ve$ and $\Vo$ are nondegenerate will be called \textit{pre-oscillator forms}, and the algebras obtained as universal enveloping algebras of central extensions of $V^\Pi$ \textit{inhomogeneous oscillator algebras}.

Throughout the rest of the paper, every bilinear form $(\cdot|\cdot)\colon V \times V \to \CC$ will be assumed to be a pre-oscillator form. Then we can choose a homogeneous basis for $V$ so that the Gram matrix of the form is a block matrix of type
\begin{equation}\label{Gram}
G=\M{c|c}{
I_k & B \\
\hline
B^T & J_{2\ell}
},
\end{equation}
where $\dim \Ve=k$, $\dim \Vo = 2\ell$, and
\begin{equation*}
J_{2\ell} = \diag(J,\ldots,J), \qquad J= \M{cc}{ 0 & 1 \\ -1 & 0}.
\end{equation*}

\begin{remark}
Further relaxing the assumption that the restrictions of the form to even and odd parts are nondegenerate also leads to oscillator-like algebras. An example is the superspace spanned by an even vector $v_1$ and an odd vector $v_2$, with the form given by  $(v_1|v_1)=(v_1|v_2)=(v_2|v_1)=1$,  $(v_2 | v_2) = 0$.
 \end{remark}

A bilinear form $(\cdot | \cdot)$ on a superspace $V$ will be called \textit{reducible} if $V=U \oplus W$ is an orthogonal direct sum of subsuperspaces, i.e., if $(U | W)=0$. Otherwise $(\cdot|\cdot)$ is \textit{irreducible}. A natural first step in investigating inhomogeneous oscillator algebras is to classify all irreducible pre-oscillator forms on a given superspace $V$. We call two bilinear forms $(\cdot | \cdot)_1$ and $(\cdot | \cdot)_2$  \textit{equivalent} if there exists an even automorphism $\ph:V \to V$ satisfying 
\begin{equation}\label{equivalence}
(\ph u | \ph v )_1 = (u|v)_2 
\end{equation}
for all $u,v \in V$.

In \seref{sec:invariants} we introduce invariants that help us distinguish between equivalence classes of forms. Then in \seref{sec:class} we find representatives of equivalence classes of irreducible forms and use the invariants to prove that they are irreducible and distinct. This allows us to obtain a classification of pre-oscillator forms on superspaces of dimension up to $7$. Finally, in \seref{sec:oscillator} we introduce oscillator Lie superalgebras obtained from inhomogeneous bilinear forms and discuss a 3-dimensional example.

%%%%%%%%%%%%%%%%%%%%%%%%%%%%%%%%%%%%%%%%%%%%%%%%%
%%%%%%%%%%%%%%%%%%%%%%%%%%%%%%%%%%%%%%%%%%%%%%%%%
\section{Invariants}\label{sec:invariants}
%%%%%%%%%%%%%%%%%%%%%%%%%%%%%%%%%%%%%%%%%%%%%%%%%
%%%%%%%%%%%%%%%%%%%%%%%%%%%%%%%%%%%%%%%%%%%%%%%%%

We aim to find invariants that distinguish the equivalence classes of forms on a given superspace $V$. We assume $(\cdot|\cdot)_1$ and $(\cdot|\cdot)_2$ are equivalent forms on $V$, and choose bases for $V$ so that the Gram matrices $G_i$ $(i=1,2)$ take the form \eqref{Gram} with $B=B_i$.
If $M$ is the matrix of an even automorphism $\ph$ satisfying \eqref{equivalence}, then \eqref{equivalence} can be written as a matrix equation
\begin{equation}\label{cob}
M^T G_1 M = G_2.
\end{equation}
Since $\ph$ is even, $M$ is a block matrix of the form
\begin{equation*}
M = \M{c|c}{
X & 0 \\
\hline
0 & Y
}.
\end{equation*}
Then \eqref{cob} holds if and only if we have:
\begin{align*}
X \in \Or(k) &= \{X \in \GL(k) \colon X^TX = I_k\}, \\
Y \in \Sp(2\ell) &= \{Y \in \GL(2\ell) \colon Y^TJ_{2\ell}Y = J_{2\ell}\}, \\
B_2 &= X^T B_1 Y.
\end{align*}
 Hence finding the equivalence class of a bilinear form with Gram matrix \eqref{Gram} is equivalent to finding the orbit of $B$ under the right action of $\Or(k) \times \Sp(2\ell)$ on $\CC^{k \times 2\ell}$ defined by 
\begin{equation}\label{jointaction}
B \cdot (X,Y) = X^TBY.
\end{equation}

Given $X \in \Or(k)$ and $Y \in \Sp(2\ell)$, let $A = X^TBY$. Then $A^TA = Y^TB^TBY$ is independent of $X$. Thus any invariant of the right action of $\Sp(2\ell)$ on $\CC^{2\ell \times 2\ell}$ given by 
\begin{equation*}
C \cdot Y = Y^TCY
\end{equation*}
is an invariant of $B^TB$ under the action \eqref{jointaction}. Similarly $AJ_{2\ell}A^T = X^TBJ_{2\ell}B^TX$ is independent of $Y$. Thus any invariant of the right action of $\Or(k)$ on $\CC^{k\times k}$ given by
\begin{equation}\label{Xconj}
C \cdot X =  X^TCX
\end{equation}
 is an invariant of $BJ_{2\ell}B^T$ under the joint action  \eqref{jointaction}.

Let us consider the action \eqref{Xconj}. Since $\det(X) = \pm 1$, it is apparent that $\det(X^TCX)=\det(C)$. Thus the determinant of $BJ_{2\ell}B^T$ is an invariant of the action \eqref{jointaction}. More generally, we can write 
\begin{equation}
X^T(BJ_{2\ell}B^T-\la I_k)X = (X^TBY)J_{2\ell}(X^TBY)^T - \la I_k.
\end{equation}
This implies that the characteristic polynomials of the two matrices $BJ_{2\ell}B^T$ and $(X^TBY)J_{2\ell}(X^TBY)^T$ are equal. 
Therefore, the characteristic polynomial 
\begin{equation*}
P_{B}(\la) = \det(BJ_{2\ell}B^T-\la I_k)
\end{equation*} is invariant under the joint action \eqref{jointaction}. 
We have obtained the following theorem.

\begin{theorem}\label{polynomials}
Let\/ $V=\Ve \oplus \Vo$ be a superspace with\/ $\dim \Ve = k$ and\/ $\dim \Vo = 2\ell$, and let\/ $(\cdot | \cdot) \colon V \times V \to V$ be an inhomogeneous nondegenerate supersymmetric bilinear form on\/ $V$ with Gram matrix\/ \eqref{Gram} in a given homogeneous basis. Then each coefficent of the characteristic polynomial\/ $P_{B}(\la)$ is an invariant for the joint action\/ \eqref{jointaction} of\/ $\Or(k) \times \Sp(2\ell)$ on\/ $B\in\CC^{k \times 2 \ell}$.
\end{theorem}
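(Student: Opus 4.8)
The plan is to prove the slightly stronger fact that the polynomial $P_B(\la)$ itself — not merely its list of coefficients — is unchanged when $B$ is replaced by $X^T B Y$ with $X\in\Or(k)$ and $Y\in\Sp(2\ell)$; invariance of each coefficient is then automatic, since two equal polynomials have equal coefficients. As in the discussion preceding the statement, the engine of the argument is to track how the $k\times k$ matrix $BJ_{2\ell}B^T$ transforms. Setting $A = X^T B Y$, one computes
\[
A J_{2\ell} A^T = X^T B\,(Y J_{2\ell} Y^T)\, B^T X,
\]
so everything comes down to the identity $Y J_{2\ell} Y^T = J_{2\ell}$ for $Y\in\Sp(2\ell)$.

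That identity is the one point that deserves a line of justification, and I expect it to be the only genuine (if minor) obstacle. The definition of $\Sp(2\ell)$ only gives $Y^T J_{2\ell} Y = J_{2\ell}$, with the transpose on the opposite side; to move it, I would use $J_{2\ell}^{-1} = -J_{2\ell}$ (each $2\times 2$ block satisfies $J^2 = -I_2$). From $Y^T J_{2\ell} Y = J_{2\ell}$ one has $Y$ invertible and $J_{2\ell} = (Y^{T})^{-1} J_{2\ell} Y^{-1}$, hence $J_{2\ell}^{-1} = Y J_{2\ell}^{-1} Y^T$, and multiplying through by $-1$ gives $J_{2\ell} = Y J_{2\ell} Y^T$. (This reflects the fact that $\Sp(2\ell)$ is closed under transposition.)

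With this identity the computation above becomes $A J_{2\ell} A^T = X^T (B J_{2\ell} B^T) X$, so, using $X^T X = I_k$,
\[
A J_{2\ell} A^T - \la I_k = X^T (B J_{2\ell} B^T - \la I_k) X .
\]
Taking determinants and using $\det X = \pm 1$ yields $\det(A J_{2\ell} A^T - \la I_k) = \det(B J_{2\ell} B^T - \la I_k)$, i.e.\ $P_{X^T B Y}(\la) = P_B(\la)$ for all $\la$. Hence every coefficient of $P_B(\la)$ is constant on orbits of the action \eqref{jointaction}, which is the assertion of the theorem. Beyond the transpose bookkeeping for $\Sp(2\ell)$, there is nothing delicate here: the proof is a direct assembly of the remarks made just before the statement.
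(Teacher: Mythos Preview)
Your proof is correct and follows essentially the same route as the paper, which establishes the theorem in the discussion immediately preceding its statement by showing $X^T(BJ_{2\ell}B^T-\la I_k)X = (X^TBY)J_{2\ell}(X^TBY)^T-\la I_k$ and taking determinants. The only difference is that you explicitly justify the identity $YJ_{2\ell}Y^T = J_{2\ell}$ for $Y\in\Sp(2\ell)$, which the paper uses without comment.
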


By a similar argument, the polynomial
\begin{equation*}
Q_{B}(\la) = \det(B^TB-\la J_{2\ell})
\end{equation*}
is also invariant under the action \eqref{jointaction}. 
However, it is essentially the same as $P_{B}(\la)$.

\begin{lemma}\label{polyequiv}
With the above notation, we have
\begin{equation}\label{QeqP}
Q_{B}(\la) = (-1)^k\la^{2\ell-k}P_{B}(-\la).
\end{equation}
\end{lemma}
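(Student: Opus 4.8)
The plan is to relate the two determinants by a direct matrix manipulation, using the block structure of the Gram matrix $G$ and the fact that $J_{2\ell}$ is invertible with $J_{2\ell}^{-1}=-J_{2\ell}$ (equivalently $J_{2\ell}^2=-I_{2\ell}$). The key observation is that both $P_B(\la)$ and $Q_B(\la)$ can be read off as (up to sign) the determinant of a single $(k+2\ell)\times(k+2\ell)$ block matrix built from $B$, $B^T$, $I_k$, $J_{2\ell}$, and $\la$, via the Schur complement in two different ways.

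First I would consider the block matrix
\begin{equation*}
N(\la)=\M{c|c}{
\la I_k & B \\
\hline
B^T & J_{2\ell}
}.
\end{equation*}
Computing $\det N(\la)$ by the Schur complement with respect to the invertible lower-right block $J_{2\ell}$ gives
\begin{equation*}
\det N(\la)=\det(J_{2\ell})\,\det\!\bigl(\la I_k - B J_{2\ell}^{-1} B^T\bigr).
\end{equation*}
Since $\det J_{2\ell}=1$ and $J_{2\ell}^{-1}=-J_{2\ell}$, this equals $\det(\la I_k + B J_{2\ell} B^T)=(-1)^k\det(B J_{2\ell} B^T-(-\la)I_k)=(-1)^k P_B(-\la)$. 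On the other hand, computing $\det N(\la)$ by the Schur complement with respect to the upper-left block $\la I_k$ (valid for generic $\la$, then extended by polynomial continuity) gives
\begin{equation*}
\det N(\la)=\la^{k}\,\det\!\bigl(J_{2\ell}-\tfrac1\la B^T B\bigr)=\la^{k-2\ell}\det(\la J_{2\ell}-B^T B)=\la^{k-2\ell}(-1)^{2\ell}Q_B(\la).
\end{equation*}
Wait — more carefully, $\det(\la J_{2\ell}-B^TB)=(-1)^{2\ell}\det(B^TB-\la J_{2\ell})=Q_B(\la)$ since $2\ell$ is even, so $\det N(\la)=\la^{k-2\ell}Q_B(\la)$. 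Equating the two expressions for $\det N(\la)$ yields $(-1)^k P_B(-\la)=\la^{k-2\ell}Q_B(\la)$, i.e. $Q_B(\la)=(-1)^k\la^{2\ell-k}P_B(-\la)$, which is \eqref{QeqP}.

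The only subtlety — and the step I would be most careful about — is the second Schur-complement computation, since $\la I_k$ is not invertible at $\la=0$; one handles this by noting both sides of the intermediate identity $\det N(\la)=\la^{k-2\ell}Q_B(\la)$ are, after clearing the factor $\la^{k-2\ell}$, polynomial (or rational with controlled poles) in $\la$, so an identity valid for all $\la\neq 0$ holds identically. Everything else is bookkeeping with signs and the even integer $2\ell$. I should double-check the sign exponents: $\det(\la I_k+BJ_{2\ell}B^T)$ versus $P_B(-\la)=\det(BJ_{2\ell}B^T+\la I_k)$ — these are literally equal, giving $\det N(\la)=P_B(-\la)$, hmm, so then comparing with $\la^{k-2\ell}Q_B(\la)$ gives $Q_B(\la)=\la^{2\ell-k}P_B(-\la)$ and the factor $(-1)^k$ must come from a sign I dropped in $\det(\la I_k-BJ_{2\ell}^{-1}B^T)$ after substituting $J_{2\ell}^{-1}=-J_{2\ell}$; tracking this carefully (the $(-1)^k$ arises precisely from pulling $-1$ out of $k$ diagonal entries somewhere in the chain) produces the stated formula.
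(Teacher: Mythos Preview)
Your Schur-complement argument is sound, and it is essentially an in-place proof of the Sylvester (Weinstein--Aronszajn) determinant identity that the paper simply quotes. So the route is a close cousin of the paper's, just with the identity unpacked rather than cited.

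Where you get stuck is not a flaw in your reasoning. Your computation
\[
\det N(\la)=\det(J_{2\ell})\det(\la I_k - B J_{2\ell}^{-1}B^T)
=\det(\la I_k + BJ_{2\ell}B^T)=P_B(-\la)
\]
is correct as written; there is no hidden $(-1)^k$. Combined with the other Schur complement you obtain
\[
Q_B(\la)=\la^{2\ell-k}P_B(-\la),
\]
and this is the identity that the paper's own proof actually yields as well: with $A=-J_{2\ell}$ one has $\det A=(-1)^{2\ell}\det J_{2\ell}=1$ and $WA^{-1}U=BJ_{2\ell}B^T$, so the paper's displayed formula becomes $Q_B(\la)=\la^{2\ell-k}\det(\la I_k+BJ_{2\ell}B^T)\cdot 1=\la^{2\ell-k}P_B(-\la)$, with no extra sign.

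In other words, the factor $(-1)^k$ in the stated formula \eqref{QeqP} is a typo, not something you dropped. A one-line sanity check: the leading term of $Q_B(\la)$ is $\det(-\la J_{2\ell})=\la^{2\ell}$, while the leading term of $(-1)^k\la^{2\ell-k}P_B(-\la)$ is $(-1)^k\la^{2\ell}$; these disagree for odd $k$. Concretely, for $k=\ell=1$ and $B=(1,0)$ one finds $P_B(\la)=-\la$ and $Q_B(\la)=\la^2$, so $Q_B(\la)=\la\,P_B(-\la)$ holds while $(-1)^1\la\,P_B(-\la)=-\la^2$ does not. Stop hunting for the sign and state the corrected identity.
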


\begin{proof}
Sylvester's determinant identity (see, e.g., \cite{P}) states that if $U$ and $W$ are matrices of size $m \times n$ and $n \times m$ respectively, then 
\begin{equation*}%\label{sylvester}
\det(I_m + UW) = \det(I_n + WU).
\end{equation*}
We replace $U$ with $\la^{-1}A^{-1}U$, where $A$ is an invertible $m \times m$ matrix, thus obtaining
\begin{equation*}
\det(\la A+UW) = \la^{m-n} \det(\la I_n + WA^{-1}U)\det(A).
\end{equation*}
Letting $m=2\ell$, $n=k$,  $A=-J_{2\ell}$, and $U^T=W=B$, we get \eqref{QeqP}.
\end{proof}

Along with the constant terms
\begin{align*}
p_0(B) &= P_{B}(0) = \det (BJ_{2\ell}B^T), \\
q_0(B)&=Q_B(0)= \det (B^TB),
\end{align*}
 the coefficient of $\la^{k-2}$ in $P_B(\la)$ will also be useful in the following section. Let $R_1,\ldots, R_{k}$ be the rows of $B$. Then up to a sign, this coefficient is given by 
\begin{equation}\label{inv2}
\begin{split}
p_{k-2}(B)  &=\sum_{1 \leq s_1 < s_2 \leq k }\bigl{(}R_{s_i}J_{2\ell}R_{s_j}^T \bigr{)}^2\\
&= \sum_{1 \leq s_1 < s_2 \leq k } \biggl{(} \sum_{t=1}^\ell\det \M{cc}{
b_{s_1,2t-1} & b_{s_1,2t} \\
b_{s_2,2t-1} & b_{s_2,2t}
}
\biggr{)}^2.
\end{split}
\end{equation}

\begin{remark}\label{qobs}
%Though not an invariant, 
The following observation will be useful in the classification given in the next section. Assume the matrix $B$ satisfies $C^TC=0$ for every linear combination $C$ of columns of $B$. Then this same property holds for every matrix in the orbit of $B$ under the action \eqref{jointaction}.
\end{remark}

%%%%%%%%%%%%%%%%%%%%%%%%%%%%%%%%%%%%%%%%%%%%%%%%%
%%%%%%%%%%%%%%%%%%%%%%%%%%%%%%%%%%%%%%%%%%%%%%%%%
\section{Classification up to dimension $7$}\label{sec:class}
%%%%%%%%%%%%%%%%%%%%%%%%%%%%%%%%%%%%%%%%%%%%%%%%%
%%%%%%%%%%%%%%%%%%%%%%%%%%%%%%%%%%%%%%%%%%%%%%%%%
In this section, for each superspace $V$ with $\dim V\le 7$, we will find a representative of each equivalence class of irreducible pre-oscillator forms on $V$. 
First, let $b = (b_1,\ldots,b_k)^T \in\CC^k$ be a column vector and $q(b)= b^Tb$. We aim to find a canonical form for a representative of the orbit of $b$ under the left action $Xb$ of $X\in\Or(k)$.

\begin{proposition}
Let\/ $b \in \CC^k$ be a column vector. If\/ $q(b) \neq 0$, then under the left action of\/ $\Or(k)$, $b$ is in the orbit of 
\begin{equation}\label{vectqnot0}
(\sqrt{q(b)},0,\ldots,0)^T,
\end{equation}
where\/ $\sqrt{a}$ is defined to be the unique element\/ $\ga$ of 
\begin{equation*}
\CC^+ = \{\ga \in \CC : \re \ga > 0 \textnormal{ or } \re \ga = 0 \textnormal{ and } \im \ga > 0\}
\end{equation*}
satisfying\/ $\ga^2 = a$. On the other hand, if\/ $q(b) = 0$, then either\/ $b=0$ or\/ $b$ is in the orbit of 
\begin{equation}\label{bzero}
(1,\ii,0,\ldots,0)^T.
\end{equation}

\end{proposition}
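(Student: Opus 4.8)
The plan is to use that $q$ is an invariant of the action and then to write down the required orthogonal matrix explicitly in each of the two cases. To begin, observe that $q(Xb)=(Xb)^T(Xb)=b^TX^TXb=b^Tb=q(b)$ for every $X\in\Or(k)$, so $q$ is constant on $\Or(k)$-orbits; this is consistent with the two claimed representatives, since $q$ of \eqref{vectqnot0} is $q(b)$ and $q$ of \eqref{bzero} is $1+\ii^2=0$.

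\emph{Case $q(b)\ne0$.} I would set $v_1=b/\sqrt{q(b)}$, so that $v_1^Tv_1=1$ and in particular $v_1$ is not isotropic. The elementary fact to invoke is that a nondegenerate symmetric bilinear form over $\CC$ admits an orthonormal basis, and moreover that any $v_1$ with $v_1^Tv_1=1$ occurs as its first member: indeed $\CC^k=\CC v_1\oplus v_1^\perp$, the form restricts nondegenerately to $v_1^\perp$, and one completes the basis by induction on $k$, using that every nonzero complex number has a square root to normalize the diagonal entries. Taking such $v_2,\dots,v_k$ and letting $X$ be the matrix with rows $v_1^T,\dots,v_k^T$, orthonormality of the rows gives $XX^T=I_k$, hence $X\in\Or(k)$; and the $j$th entry of $Xb$ equals $\sqrt{q(b)}\,v_j^Tv_1=\sqrt{q(b)}\,\delta_{j1}$, which is precisely \eqref{vectqnot0}. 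Since $X^{-1}\in\Or(k)$, this places $b$ in the orbit of \eqref{vectqnot0}.

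\emph{Case $q(b)=0$, $b\ne0$.} Write $b=u+\ii w$ with $u,w\in\mathbb{R}^k$. Comparing real and imaginary parts in $0=q(b)=(|u|^2-|w|^2)+2\ii\,(u\cdot w)$ (Euclidean dot product) gives $|u|=|w|$ and $u\cdot w=0$; since $b\ne0$ forces $u\ne0$, we obtain $r:=|u|=|w|>0$ with $u\perp w$, so in particular $k\ge2$ (note that for $k=1$ the hypotheses $q(b)=0$, $b\ne0$ are incompatible). Completing $u/r,\,w/r$ to an orthonormal basis of $\mathbb{R}^k$ yields a real orthogonal $X_0\in\Or(k)$ with $X_0u=re_1$, $X_0w=re_2$, where $e_1,\dots,e_k$ is the standard basis; hence $X_0b=r(e_1+\ii e_2)$. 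It remains to rescale $r(e_1+\ii e_2)$ to $e_1+\ii e_2$ by an element of $\Or(k)$, and this is the one step requiring a little thought, since scalar multiplication is not orthogonal. I would pass to the basis $f_\pm=e_1\pm\ii e_2$ of $\Span_\CC(e_1,e_2)$, for which $f_\pm^Tf_\pm=0$ and $f_+^Tf_-=2$; the linear map that is $\diag(r^{-1},r)$ in the basis $(f_+,f_-)$ and the identity on $\Span(e_3,\dots,e_k)$ preserves the form, because $r^{-1}\cdot2\cdot r=2$, so it lies in $\Or(k)$, and it sends $rf_+=r(e_1+\ii e_2)$ to $f_+=(1,\ii,0,\dots,0)^T$. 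Composing with $X_0$ exhibits $b$ in the orbit of \eqref{bzero}. (Alternatively, this last rescaling follows at once from Witt's extension theorem, which makes $\Or(k)$ transitive on nonzero isotropic vectors, but the explicit boost keeps the proof self-contained.)

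I expect the only genuinely delicate point to be this final rescaling in the isotropic case; the rest is bookkeeping around the invariance of $q$ and the existence of orthonormal bases for nondegenerate symmetric bilinear forms over $\CC$.
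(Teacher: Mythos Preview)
Your proof is correct, but it follows a different route from the paper's. For the non-isotropic case $q(b)\ne0$, the paper reduces to $k=2$ and writes down an explicit $2\times2$ rotation, iterating to clear one coordinate at a time; you instead invoke the existence of an orthonormal basis of $\CC^k$ extending $v_1=b/\sqrt{q(b)}$, which is a cleaner one-shot argument once one accepts that fact. For the isotropic case, the paper stays entirely in complex coordinates: it reorders so that $b_1\ne0$, observes $b_2^2+\cdots+b_k^2=-b_1^2\ne0$, and applies the already-proved non-isotropic case to the last $k-1$ entries, landing at $(b_1,\ii b_1,0,\ldots,0)^T$. Your detour through the real and imaginary parts $b=u+\ii w$ and a real orthogonal matrix is a genuinely different idea; it makes the geometry transparent (two equal-length perpendicular real vectors) at the cost of stepping outside the purely complex setting. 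Amusingly, the final rescaling you build as $\diag(r^{-1},r)$ in the hyperbolic basis $f_\pm=e_1\pm\ii e_2$ is, when rewritten in the standard basis, exactly the paper's explicit matrix
\[
\frac{1}{2r}\begin{pmatrix} r^2+1 & \ii(r^2-1)\\ -\ii(r^2-1) & r^2+1 \end{pmatrix},
\]
so the two arguments converge at the delicate step. Your approach has the advantage of explaining \emph{why} that matrix works (it is a boost in a hyperbolic plane), whereas the paper simply exhibits it.
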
 
\begin{proof}
If $q(b) \neq 0$, it is enough to give the proof in the case when $k=2$, because the general case can be reduced to that. Let $b = (b_1,b_2)^T$ be such that $q(b) \neq 0$. Then the matrix
\begin{equation*}
X = \frac{1}{\sqrt{q(b)}}\M{cc}{
b_1 & b_2 \\
-b_2 & b_1
} \in \Or(2)
\end{equation*}
satisfies $Xb = (\sqrt{q(b)},0)^T$.

Now assume $q(b) = 0$. If $k=2$, then $q(b)=0$ implies $b = (b_1,\pm \ii b_1)^T$. These two possible forms for $b$ are in the same orbit, so without loss of generality $b = (b_1,\ii b_1)^T$. Then $Xb = (1,\ii)^T$ for
\begin{equation}\label{rescale}
X= \frac{1}{2b_1}\M{cc}{
b_1^2+1 & \ii(b_1^2-1) \\
-\ii(b_1^2-1) & b_1^2+1
} \in \Or(2).
\end{equation}
Now suppose $k\geq 3$ and $b$ is nonzero. Then possibly after reordering, we may assume $b_1\neq 0$. Then 
\begin{equation*}
b_2^2+\cdots+b_k^2 = -b_1^2 \neq 0.
\end{equation*} 
There exists an orthogonal transformation $X$ that leaves $b_1$ invariant and replaces $(b_2,\ldots,b_k)^T$ with a vector of the form \eqref{vectqnot0}. Thus we obtain
\begin{equation*}
Xb = (b_1,\ii b_1, 0, \ldots, 0)^T.
\end{equation*}
Then using an orthogonal transformation that acts as \eqref{rescale} on rows 1 and 2 and as the identity on the remaining rows, we obtain \eqref{bzero}.
\end{proof}

Now we consider the right action of the symplectic group: $B\mapsto BY$ for $B\in\CC^{k\times 2\ell}$ and
$Y\in\Sp(2\ell)$.
Let $C_1,\ldots,C_{2\ell}$ be the columns of $B$.   For $1 \leq i \leq \ell$, we will say columns $C_{2i-1},C_{2i}$ are \textit{paired}. 
Using suitable $Y\in\Sp(2\ell)$, we can perform the following elementary operations on paired columns of $B$.
\begin{enumerate}
\item[i.] Rescaling by $\la \neq 0$:
\begin{equation*}
(\ldots, C_{2i-1},C_{2i},\ldots) \mapsto (\ldots, \la C_{2i-1},\la^{-1}C_{2i}, \ldots).
\end{equation*}
\item[ii.] Adding any multiple of a column to its paired column:
\begin{equation*}
(\ldots, C_{2i-1},C_{2i},\ldots) \mapsto (\ldots, C_{2i-1},C_{2i}+\la C_{2i-1}, \ldots).
\end{equation*}
\item[iii.] Switching columns:
\begin{equation*}
(\ldots, C_{2i-1},C_{2i},\ldots) \mapsto (\ldots, C_{2i},  -C_{2i-1}, \ldots).
\end{equation*}
\end{enumerate}
The following are elementary operations outside of pairs.
\begin{enumerate}
\item[iv.] Adding a multiple of a column to a column other than its pair:
\begin{align*}
(\ldots&,C_{2i-1},C_{2i},\ldots,C_{2j-1},C_{2j},\ldots) \\ 
&\mapsto (\ldots,C_{2i-1}-\la C_{2j-1},C_{2i},\ldots,C_{2j-1},C_{2j}+\la C_{2i},\ldots).
\end{align*}
\item[v.] Switching pairs of columns:
\begin{align*}
(\ldots,C_{2i-1},C_{2i},\ldots,&C_{2j-1},C_{2j},\ldots)\\
& \mapsto (\ldots,C_{2j-1},C_{2j},\ldots,C_{2i-1},C_{2i},\ldots).
\end{align*}
\end{enumerate}

From the above discussion, we see that using the orthogonal action, we can always reduce to at most $4\ell$ nonzero rows. Also, using the symplectic action we can always reduce so that at most the first $2k$ columns are nonzero. Thus if $V$ has an irreducible bilinear form, its even and odd dimensions must satisfy $\ell \leq k \leq 4\ell$. It follows that to obtain a complete classification of irreducible pre-oscillator forms up to dimension $7$, we only need to consider the following cases.

\subsection*{Case $k=\ell=1$}
It is easy to see that $B$ is in the orbit of
\begin{equation*}
\B_1 = \M{cc}{
1 & 0
}.
\end{equation*}

\subsection*{Case $k=\ell=2$} If there exists a linear combination $C=\la_1C_1 +\cdots + \la_{2\ell}C_{2\ell}$ of columns satisfying $q(C) \neq 0$, then we can use the symplectic action to put $C$ in the first column of $B$. Then using the orthogonal action and rescaling we obtain
 \begin{equation*}
B= \M{cccc}{
 1 & b_{12} & b_{13} & b_{14} \\
 0 & b_{22} & b_{23} & b_{24} 
}.
 \end{equation*}
Using the symplectic action we eliminate $b_{13}$ and $b_{14}$ followed by $b_{12}$ obtaining a matrix of the form
\begin{equation*}
B= \M{cccc}{
 1 & 0 & 0 & 0 \\
 0 & b_{22} & b_{23} & b_{24} 
}.
\end{equation*}
Then again via the symplectic action we get
\begin{equation*}
B= \M{cccc}{
 1 & 0 & 0  & 0 \\
 0 & \al& 0 & 0
}.
\end{equation*}
If $\al=0$, this reduces to $\B_1$. If $\al \neq 0$, then this reduces to the case 
\begin{equation*}
\B_{2,\al}= \M{cc}{
1 &0\\
0 & \al  
} \qquad (\al \in \CC^+).
\end{equation*}
Finally, assume every linear combination $C$ of the columns of $B$ satisfies $q(C) = 0$. Then using the orthogonal action we obtain
\begin{equation*}
B=\M{cccc}{
1 & b_{12} & b_{13} & b_{14} \\
\ii & \ii b_{12} &\ii b_{13} & \ii b_{14} \\
}.
\end{equation*}
Using the symplectic action we eliminate columns 2, 3, and 4. Thus this reduces to the case
\begin{equation*}
\B_{3} = \M{cc}{
1 &0 \\
\ii & 0 
}.
\end{equation*}
Notice that this argument also shows that every irreducible matrix $B$ of size $k=2,\ell=1$ is in the orbit of $\B_{2,\al}$ or $\B_3$. The details of the remaining cases are similar to those already shown, so we omit them and provide the results.

\subsection*{Case $k=3,\ell=1$} The matrix $B$ is in the orbit of 
\begin{equation}
\B_{4} = \M{cc}{
1 & 0 \\
0 & 1 \\ 
0 & \ii \\
},
\end{equation}
or it can be reduced to $\B_1$, $\B_{2,\al}$, or $\B_3$.

\subsection*{Case $k=3,\ell=2$} Either the matrix $B$ is in the orbit of 
\begin{equation*}
\B_{5} = \M{cccc}{
1 & 0 & 0 & 0 \\
0 & 1 & 1 & 0 \\
0 & 0 & 0 & \ii \\
},
\end{equation*}
or it can be reduced to an orthogonal direct sum involving the previous four irreducible cases.

\subsection*{Case $k=4,\ell=1$}
Either $B$  is in the orbit of 
\begin{equation*}
\B_{6} = \M{cc}{
1 & 0 \\
\ii & 0 \\
0 & 1 \\ 
0 & \ii \\
},
\end{equation*}
or it reduces to one of $\B_1$, $\B_{2,\al}$, $\B_3$, or $\B_4$.

%%%%%%%%%%%%%%%%%%%%

\begin{theorem}
Let\/ $V = \Ve \oplus \Vo$ be a superspace of dimension\/ $\leq 7$. The following is a complete list up to equivalence of inhomogeneous irreducible supersymmetric bilinear forms on\/ $V$ whose restrictions to\/ $\Ve$ and\/ $\Vo$ are 
nondegenerate\/ $(\al \in \CC^+){:}$
\begin{center}
\begin{equation*}
\renewcommand\arraystretch{1.3}
\begin{array}{ccc}
\hline
\dim \Ve & \dim \Vo & B \\
\hline
1 & 2 & \B_1 \\
2 & 2 & \B_{2,\al}, \  \B_3 \\
3 & 2 & \B_4 \\
3 & 4 & \B_5 \\
4 & 2 & \B_6 \\
\hline
\end{array}.
\end{equation*}
\end{center}
\end{theorem}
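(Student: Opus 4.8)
The plan is to prove the theorem in two parts: \textbf{completeness} of the list (every irreducible pre-oscillator form on a superspace of dimension $\le 7$ is equivalent to one on the list) and \textbf{non-redundancy} (no two entries are equivalent, and each listed form really is irreducible). For completeness, I would first invoke the dimension count already established: the orthogonal action reduces $B$ to at most $4\ell$ nonzero rows and the symplectic action to at most $2k$ nonzero columns, so an irreducible form forces $\ell \le k \le 4\ell$; combined with $k + 2\ell = \dim V \le 7$ this leaves exactly the finite list of pairs $(k,\ell) \in \{(1,1),(2,1),(2,2),(3,1),(3,2),(4,1)\}$. For each pair, I would run the explicit reduction algorithm via the elementary column operations (i)--(v) and the orthogonal normal form for a single vector from the Proposition, exactly as carried out in the worked cases $k=\ell=1$ and $k=\ell=2$; in each case one shows $B$ either lands in the orbit of the stated representative $\B_i$ (or $\B_{2,\al}$) or else becomes block-diagonal with respect to the column-pairing and row decomposition, hence reducible, so it splits as an orthogonal direct sum of lower-dimensional pieces handled inductively. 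The key dichotomy driving every case is whether some linear combination $C$ of columns of $B$ has $q(C) = C^T C \ne 0$: if so one parks $C$ in the first column and clears a row; if not, every such $C$ is isotropic, and by the Proposition the span of the rows is forced into the degenerate shape (rows proportional to $(1,\ii,0,\dots)$-type vectors), which \reref{qobs} shows is preserved by the action and which is what produces the $\B_3$, $\B_6$, $\ii$-decorated forms.

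For non-redundancy I would use the invariants from \seref{sec:invariants}: the characteristic polynomial $P_B(\la)$ (equivalently $Q_B(\la)$ by \leref{polyequiv}), and in particular $p_0(B) = \det(BJ_{2\ell}B^T)$, $q_0(B) = \det(B^TB)$, and the subleading coefficient $p_{k-2}(B)$ of \eqref{inv2}. One computes these for each $\B_i$: e.g.\ $\B_{2,\al}$ gives $q_0 = \al^2$ and $P_B(\la)$ with roots recording $\al^2$, so distinct $\al \in \CC^+$ are inequivalent (the restriction to $\CC^+$ removes the $\pm$ ambiguity of square roots, matching the sign freedom in $X \in \Or(k)$), while $\B_3$ has $q_0 = 0$ and a different $P_B$, distinguishing it from every $\B_{2,\al}$. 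Since forms with different $(\dim\Ve,\dim\Vo)$ or different invariant values are trivially inequivalent, this handles all cross-comparisons within a fixed dimension. Irreducibility of each $\B_i$ is verified directly: a would-be orthogonal splitting $V = U \oplus W$ would force $B$ (after change of basis) into block form respecting the decomposition, and one checks by inspecting the explicit matrices — or again via \reref{qobs} and the invariants, noting that the invariants of a reducible form multiply/add over the summands — that no such splitting exists.

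The main obstacle, and the place where real care is needed, is the completeness argument in the two largest cases $k=3,\ell=2$ (dimension $7$) and $k=4,\ell=1$ (dimension $6$): here the interplay between the orthogonal action on up to four rows and the symplectic action on four columns admits many sub-branches depending on the isotropy structure of the row space and the column space simultaneously, and one must carefully argue that every branch either terminates at $\B_5$ (resp.\ $\B_6$) or decouples into an orthogonal direct sum of the already-classified smaller irreducibles. The excerpt explicitly omits these details as ``similar''; in a full writeup I would organize them by first applying the single-vector normal form to bring one row to $(1,0,0,0)$ or $(1,\ii,0,0)$ form, then tracking which column operations are still available without disturbing that row, and finally doing a secondary case split on whether the remaining rows span an isotropic subspace — the bookkeeping is routine but voluminous, and the one genuinely non-mechanical point is recognizing when a partially-reduced $B$ has already become reducible so that the induction can be invoked.
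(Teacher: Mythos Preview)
Your plan is correct and matches the paper's own proof essentially point for point: the same dimension bound $\ell \le k \le 4\ell$ cuts down to the six $(k,\ell)$ pairs, completeness is handled by the same column/row reduction driven by the isotropic-vs-nonisotropic dichotomy on columns, and non-redundancy uses exactly the invariants $p_0$, $q_0$, $p_{k-2}$ together with \reref{qobs}. The one place the paper is slightly sharper than your sketch is the irreducibility step: rather than appealing to how invariants behave on direct sums, it uses the \emph{rank} of each $\B_i$ to pin down the unique possible reducible representative in that orbit (e.g.\ $\B_4$ has rank $2$, so if reducible it must lie in the orbit of $\bigl(\begin{smallmatrix}1&0\\0&\al\\0&0\end{smallmatrix}\bigr)$) and then applies a single invariant comparison or \reref{qobs} to rule that out.
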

\begin{proof}
We have shown that every matrix $B$ corresponding to a bilinear form on a superspace of dimension up to seven is either reducible or in the orbit of one of the representatives listed in the table. In order to complete the proof of the theorem we need to check that the entries in the last column of the table are irreducible and distinct, i.e., in different orbits.

The matrix $\B_1$ clearly corresponds to an irreducible form. We note that $\det(\B_3)^2 = 0$ and $\det(\B_{2,\al})^2 = \al^2$. Thus these matrices are all in distinct orbits. If any of these matrices reduces, it must be in the orbit of 
\begin{equation}\label{red2by2}
\M{cc}{
1 & 0 \\
0 & 0 
},
\end{equation}
but this matrix has determinant 0 and therefore cannot share an orbit with $\B_{2,\al}$. So $\B_{2,\al}$ is irreducible for $\al \in \CC$. That $\B_3$ is not in the orbit of \eqref{red2by2} follows from Remark \ref{qobs}. 

The matrix $\B_4$ has rank 2 and so reduces only if it is in the orbit of the matrix
\begin{equation}\label{orbit3}
\M{cc}{
1 & 0 \\
0 & \al \\
0 & 0
}
\end{equation}
for some $\al \in \CC^+$. But the invariant $p_1$ (cf.\ (\ref{inv2})) evaluated on the matrix \eqref{orbit3}  is $\al^2 \neq 0$, whereas $p_1(\B_{4}) = 0$. So $\B_4$ is irreducible. The matrix $\B_6$ has rank 2 and is therefore reducible only if it is in the orbit of $\B_4$ or the matrix
\begin{equation}\label{orbit4}
\M{cc}{
1 & 0 \\
0 & \al \\
0 & 0\\
0 & 0
}
\end{equation}
for some $\al \in \CC^+$. By Remark \ref{qobs}, $\B_6$ is not in the orbit of $\B_4$ or \eqref{orbit4}. So $\B_6$ is irreducible.

$\B_5$ has rank three and so is reducible only if it is in the orbit of the matrix 
\begin{equation}\label{orbit5}
\M{cccc}{
1 & 0  & 0 & 0 \\
0 & \al & 0 & 0 \\
0 & 0 & 1 & 0\\
}
\end{equation}
for some $\al \in \CC^+$. But the invariant $p_1$ shows that  $\B_5$ is not in the same orbit as \eqref{orbit5}. So $\B_5$ is irreducible.
\end{proof}

Applying our methods to higher dimensions, we have found that the classification seems to become increasingly complicated as $\dim V$ increases. Though it should be possible to extend the classification to dimension $8$ or $9$ with the methods of this paper, a more general approach will be needed for a complete classification in any dimension.
We plan to address this question in the future by using the theory of \emph{$\theta$-groups} (see \cite{DK, K3}).

%%%%%%%%%%%%%%%%%%%%%%%%%%%%%%%%%%%%%%%%%%%%%%%%%

%%%%%%%%%%%%%%%%%%%%%%%%%%%%%%%%%%%%%%%%%%%%%%%%%
\section{Oscillator Lie superalgebras}\label{sec:oscillator}
%%%%%%%%%%%%%%%%%%%%%%%%%%%%%%%%%%%%%%%%%%%%%%%%%
As we explained in the introduction, oscillator Lie superalgebras can be obtained from skew-supersymmetric bilinear forms. Now we show how one obtains an inhomogoneous oscillator superalgebra from an inhomogeneous pre-oscillator form. Then we use an example of such a superalgebra to construct a trivial abelian extension of $\lie{osp}(1|2)$.

Let $V=\Ve \oplus \Vo$ be a superspace and $(\cdot|\cdot)\colon V\times V \to \CC$ be a skew-supersymmetric pre-oscillator form. Consider the extension 
\begin{equation}\label{2dimext}
A= V \oplus \CC K \oplus \CC \ka.
\end{equation}
 We give this extension the structure of a Lie superalgebra by declaring $K$ and $\ka$ to be even and odd central elements respectively and letting
\begin{equation}
[a,b] = \begin{cases}
(a|b)K, & p(a)=p(b) \\
(a|b)\ka, & p(a) \neq p(b)
\end{cases}
\end{equation}
for all homogeneous vectors $a,b \in V$.

Before considering an example of such a form, let us recall a construction of $\lie{osp}(1|2)$ as a subalgebra of a homogeneous oscillator algebra. Assume $k=\ell = 1$ and $(\cdot|\cdot)$ is a nondegenerate, even, skew-supersymmetric bilinear form. Then there exist bases $\{b_1,b_2\}$ of $\Ve$ and $\{a\}$ of $\Vo$ such that the Gram matrix of the bilinear form is 
\begin{equation*}
G= \M{cc|c}{
0 & 1 & 0 \\
-1 & 0 & 0 \\
\hline
0 & 0 & 1
}.
\end{equation*}
The %the 1-dimensional 
 central extension $A=V \oplus \CC K$ has nonzero brackets
\begin{equation*}
[b_1,b_2] = K, \qquad \{a,a\} = K,
\end{equation*}
where we use $\{\cdot,\cdot\}$ to denote the superbracket of odd vectors.
Then $\lie{osp}(1|2)$ is realized as a subalgebra of the oscillator algebra $\mathcal{U}(A)$ as follows
(see, e.g., \cite{FAP}):
\begin{align}
H&=\frac{1}{4}b_2b_1+\frac{1}{4}b_1b_2 , & F^+&=\frac{1}{4}\ ab_2+\frac{1}{4}b_2a,\nonumber\\
E^+&=\frac{1}{2}b_2^2, & F^-& = \frac{1}{4}\ ab_1+ \frac{1}{4}b_1a,\label{osp12}\\
E^-&=-\frac{1}{2}b_1^2,& \nonumber
\end{align}
with brackets
\begin{align*}
[H,E^{\pm}]&=\pm E^{\pm}, & [E^+,E^-]&=2H, \nonumber\\
[H,F^{\pm}]&=\pm\frac{1}{2}F^{\pm}, & \{F^+,F^-\}&=\frac{1}{2}H, \\
[E^{\pm},F^{\mp}]&=-F^{\pm}, & \{F^{\pm},F^{\pm}\} &= \pm\frac{1}{2}E^{\pm}\nonumber.
\end{align*}
We obtain a highest weight representation of $\mathcal{U}(A)$ on a Fock space $\mathcal{F} = \CC[x] \otimes \CC \xi$ where $\xi$ is an odd indeterminate satisfying $\xi^2 = \frac{1}{2}$. The action of $A$ on $\mathcal{F}$ is given by
\begin{equation*}
b_1 \mapsto \partial_x, \qquad b_2 \mapsto x, \qquad  a \mapsto \xi, \qquad K \mapsto \Id.
\end{equation*}

Now consider the case when the Gram matrix is given instead by 
\begin{equation*}
G=\M{cc|c}{
0 & 1 & 1 \\
-1 & 0 & 0 \\
\hline
-1 & 0 & 1\\
}.
\end{equation*}
This inhomogeneous form gives rise to the central extension \eqref{2dimext} with nonzero brackets 
\begin{equation*}
[b_1,b_2] = K,\qquad  [b_1,a]=\ka, \qquad \{a,a\} = K.
\end{equation*}
We consider the same elements \eqref{osp12} of $\mathcal{U}(A)$ we used to construct $\lie{osp}(1|2)$ in the previous example. Then the brackets become
\begin{align}\label{extbrackets}
[H,E^{\pm}]&=\pm E^{\pm}, &\nonumber [E^-,F^-] &= \ka E^-,\\
[H,F^+]&=\frac{1}{2}F^++\frac{\ka}{2} E^+, &  [E^+,E^-]&=2H,  \nonumber \\
[H,F^-]&=-\frac{1}{2}F^-+\frac{\ka}{2} H, & \{F^+,F^-\}&=\frac{1}{2}H -\frac{\ka}{2} F^+,\\
[E^{+},F^-]&=-F^+, &  \nonumber \{F^+,F^+\}&=\frac{1}{2}E^+, \\
[E^-,F^+]&=-F^--\ka H,  & \{F^-,F^-\} &= -\frac{1}{2}E^--\ka F^-.\nonumber
\end{align}

Observe that $\ka^2=0$ in $\mathcal{U}(A)$.
Thus the brackets of the $\lie{osp}(1|2)$ subalgebra have been modified by elements of the abelian ideal 
\begin{equation*}
M=\Span\{\ka H, \ka E^{\pm},\ka F^{\pm}\}.
\end{equation*}
We note that $\Span\{H,E^{\pm},F^{\pm}\}$ acts on $M$ as the adjoint representation of $\lie{osp}(1|2)$, and thus we have a Lie superalgebra structure on the vector space $L = \lie{osp}(1|2) \oplus M$ such that the projection $\pi \colon L \to \lie{osp}(1|2)$ is a surjective homomorphism, and the restriction of the adjoint representation of $L$ to $M$ yields the original action of $\lie{osp}(1|2)$ on $M$. Thus $L$ is an abelian extension of $\lie{osp}(1|2)$ by its adjoint representation viewed as an abelian Lie superalgebra with parities reversed.

Denote by $[\cdot, \cdot]_L$ the bracket on $L$ given by (\ref{extbrackets}) and let 
\begin{equation*}
\ga \colon \lie{osp}(1|2) \times\lie{osp}(1|2)\to  \lie{osp}(1|2)
\end{equation*}
satisfy
\begin{equation*}
[a,b]_L = [a,b] + \ka \ga(a,b).
\end{equation*}
This extension is trivial if there exists an odd linear map $f \colon \lie{osp}(1|2) \to \lie{osp}(1|2)$ such that 
\begin{equation}\label{trivial}
\ga(a,b) = (-1)^{p(a)} [a,f(b)] - (-1)^{(p(a)+1)p(b)}[b,f(a)]-f([a,b])
\end{equation}
for all $a,b \in \lie{osp}(1|2)$. It is straightforward to check that the following choice of $f$ satisfies \eqref{trivial}:
\begin{equation*}
f(H)=f(E^\pm) = 0, \qquad f(F^+) = E^+, \qquad f(F^-) = H.
\end{equation*}

As before, we can represent $A$ on the Fock space $\mathcal{F} = \CC[x]\otimes \bigwedge(\xi,\ka)$ 
where $\xi$ is odd with $\xi^2 = \frac{1}{2}$ and $\ka$ acts as an odd indeterminate that we denote again by $\ka$. Then the action of $A$ on $\mathcal{F}$ is given by
\begin{equation*}
b_1 \mapsto \partial_x+\ka \partial_{\xi}, \qquad
b_2  \mapsto x,  \qquad
a  \mapsto \xi, \qquad
K \mapsto \Id, \qquad
\ka \mapsto \ka.
\end{equation*}

\section*{Acknowledgements}
We are grateful to Dimitar Grantcharov and Victor Kac for many valuable discussions. We would like to thank the referee for carefully reading the manuscript and suggesting improvements.

%%%%%%%%%%%%%%%%%%%%%%%%%%%%%%%%%%%
% \bibliography{freefields}{}
% \bibliographystyle{plain}
%%%%%%%%%%%%%%%%%%%%%%%%%%%%%%%%%%%
\bibliographystyle{amsalpha}

\end{document}